\newtheorem{thm}{Theorem}[section]
\newtheorem{lemma}[thm]{Lemma}
\newtheorem{cor}[thm]{Corollary}
\newtheorem{prop}[thm]{Proposition}
\DeclareMathOperator{\id}{id}
\theoremstyle{definition}
\newtheorem*{summary*}{Summary}
\newtheorem{defi}[thm]{Definition}
\newtheorem{example}[thm]{Example}
\theoremstyle{remark}
\newtheorem*{remark*}{Remark}
\DeclareMathOperator*{\colim}{colim}
\DeclareMathOperator{\Ord}{Ord}
\DeclareMathOperator{\Hom}{Hom}
\DeclareMathOperator{\Aut}{Aut}
\DeclareMathOperator{\Iso}{Iso}
\newcommand{\fieldk}{\mathbb{K}}
\newcommand{\ul}{\underline}
\newcommand{\bcirc}{\:\bar{\circ}\:}
\newcommand{\spe}{\mathsf}
\newcommand{\assspecies}{\spe{Ass}}
\newcommand{\commspecies}{\spe{Com}}
\newcommand{\Ii}{\spe{I}}
\newcommand{\Nn}{\spe{N}}
\newcommand{\psp}{\spe{W}}
\newcommand{\Xx}{\spe{X}}
\newcommand{\speciesa}{\spe{F}}
\newcommand{\speciesb}{\spe{G}}
\newcommand{\speciesc}{\spe{H}}
\newcommand{\operadspecies}{\spe{P}}
\newcommand{\cooperadspecies}{\spe{C}}
\newcommand{\coopspecies}{\cooperadspecies}
\newcommand{\ope}{\mathcal}
\newcommand{\counitcooperad}{\ope{I}}
\newcommand{\com}{\ope{C}om}
\newcommand{\comm}{\com}
\newcommand{\cocomm}{co\ope{C}om}
\newcommand{\ass}{\ope{A}ss}
\newcommand{\coass}{co\ope{A}ss}
\newcommand{\operad}{\ope{P}}
\newcommand{\coop}{\ope{C}}
\newcommand{\cooperad}{\coop}
\newcommand{\Cc}{\coop}
\newcommand{\wordcooperad}{\ope{W}}
\newcommand{\noncrossingcooperad}{\ope{N}}
\newcommand{\coalgebras}{\text{-coalgebras}}
\newcommand{\coalga}{X}
\newcommand{\groundthing}{\mathbb{A}}
\newcommand{\category}{\mathbf{Lin}}
\title{A non-crossing word cooperad for free homotopy probability theory}
\author{Gabriel C. Drummond-Cole}
\thanks{This work was supported by IBS-R003-D1.}
\address{Center for Geometry and Physics, Institute for Basic Science (IBS), Pohang 37673, Republic of Korea}
\begin{document}
\begin{abstract}
We construct a cooperad which extends the framework of homotopy probability theory to free probability theory. The cooperad constructed, which seems related to the sequence and cactus operads, may be of independent interest.
\end{abstract}
\maketitle
\section*{Introduction}
The purpose of this paper is to provide a convenient operadic framework for the cumulants of free probability theory. In~\cite{DrummondColeParkTerilla:HPTI,DrummondColeParkTerilla:HPTII}, the author and his collaborators described an operadic framework for classical and Boolean cumulants. This framework involves a choice of governing cooperad, and in both the classical and Boolean cases, the choice is an ``obvious'' and well-studied algebraic object. Namely, for classical cumulants, the governing cooperad is the cocommutative cooperad, while for Boolean cumulants it is the coassociative cooperad.

Extending this framework to free probability requires the construction of a governing cooperad with certain properties. The main construction of this paper is a cooperad, called the \emph{non-crossing cooperad}, satisfying these properties. As far as the author can tell, this cooperad is, at least to some degree, new. No well-studied cooperad (such as those in~\cite{Zinbiel:ETA2010}) seems to satisify the requisite properties.  That said, there is clearly some sort of relationship between the newly constructed cooperad and the sequence~\cite{McClureSmith:MCOLNC,BergerFresse:COAC} and cactus~\cite{Voronov:NUA,Kaufmann:OSVCTR,Kaufmann:OSCDCCKHA} operads. This line of thinking is not pursued in this article beyond the remark at the end of Section~\ref{section:words}. If it turns out that this cactus variant is well-known, that would be delightful---please let us know. 

Also, we make no attempt here to axiomatize the properties necessary to interface appropriately with free probability or to prove any uniqueness results. That is to say, there is every likelihood that this is the ``wrong'' cooperad. First of all, there is the near miss in terms of structure compared to the previously known operads. In addition, there are at least two failures of parallelism between the classical and Boolean cases and the new case presented here. See the remark following Theorem~\ref{thm: freever}. One possible explanation for these failures is that the correct framework requires \emph{operator-valued} free cumulants, that is, free cumulants with a not necessarily commutative ground ring. This line of reasoning will be pursued in other work~\cite{DrummondCole:OAOVFC}. It would also be exciting to hear about other potential frameworks to bring free cumulants into the framework of this kind of operadic algebra, whether along the same rough lines as in this paper or not.

The remainder of the paper is organized as follows. We describe the kind of words we will use and construct two cooperads spanned by them. The first, the \emph{word cooperad}, is auxilliary for our purposes although it may have independent interest. We construct the \emph{non-crossing word cooperad} as a quotient of the word cooperad. After a brief review of necessary notions from homotopy probability theory and free probability theory, we apply the non-crossing word cooperad to the motivating question and show that it fits into the framework of homotopy probability theory. 

For convenience, we work with \emph{unbiased} definitions of operads and cooperads, writing them in terms of finite sets and never choosing a particular ordered set. This is not usual in the literature although it should be familiar to experts. We conclude with an appendix describing only those aspects of this theory necessary for the current paper.
\subsection*{Conventions}
We will use the notation $[n]$ to denote the set $\{1,\ldots, n\}$. We work over a field $\fieldk$ of characteristic zero.

\subsection*{Acknowledgements}
The author gratefully acknowledges useful conversations with Joey Hirsh, John Terilla, Jae-Suk Park, and Ben Ward.

\section{Words and their cooperads}\label{section:words}

\subsection{Words}
This section establishes some basic definitions and lemmas about words.

A word $w$ is a nonempty finite sequence of elements from a set $S$. In this context, $S$ is called the \emph{alphabet} and elements of $S$ or the sequence $w$ are called letters.

The word $w$ is {\em pangrammatic} if it contains each letter from the alphabet $S$.
\begin{defi}
The word $w$ is {\em reduced} if it has no subword of the form $aa$ and either is length one or has different first and last letters. 

The {\em reduction} $\overline{w}$ of the word $w$ is the unique minimal length word obtained by repeated reduction by 
\begin{eqnarray*}
\ldots aa\ldots &\mapsto &\ldots a\ldots 
\\a\ldots a&\mapsto& a\ldots
\end{eqnarray*}
In the second case, $a$ must be the first and last letter of $w$; this relation is not a ``local'' move on subwords.
\end{defi}
\begin{defi}
The word $w$ is {\em non-crossing} if it never contains
\[\ldots a \ldots b\ldots a\ldots b\ldots\]
for distinct $a$ and $b$ in $S$. 

A word is {\em crossing} unless it is non-crossing.
\end{defi}

\begin{remark*}
A map of sets $f:S\to T$ induces a map from words in $S$ to words in $T$, which will be also denoted by $f$.
\end{remark*}

\begin{defi}
Let $w$ be a word on the alphabet $T$ and let $S$ be a subset of the alphabet $T$ which contains at least one letter of $w$. Then $w|S$, called {\em the word restricted to $S$}, is the word obtained by deleting all letters not in $S$.
\end{defi}
If a word $w$ is pangrammatic then $w$ can be restricted to any nonempty subset of the alphabet and the result is pangrammatic.

The following lemmas about reduction, restriction, and words induced by functions, are immediate.
\begin{lemma}\label{lemma:reduce function reduce}
Let $w$ be a word on the alphabet $S$ and let $f$ be a map of sets $S\to T$. Then $\overline{f(\overline{w})}=\overline{f(w)}$.
\end{lemma}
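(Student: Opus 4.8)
The plan is to show that reduction moves are natural with respect to $f$: every single reduction step on $w$ pushes forward to a single reduction step on $f(w)$, and then to invoke the uniqueness of the reduct asserted in the definition of $\overline{(\,\cdot\,)}$.

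First I would check the pushforward of one step. The point is that $f$ preserves equalities of letters: if two letters of a word agree, so do their images. Hence if $w \to w'$ is a first move, say $w = u\,aa\,v$ and $w' = u\,a\,v$, then $f(w) = f(u)\,f(a)f(a)\,f(v)$ exhibits the adjacent repeated letter $f(a)f(a)$, and collapsing it gives exactly $f(w')$. If $w \to w'$ is a second move, say $w = a\,v\,a$ with common first and last letter $a$ and $w' = a\,v$, then $f(w) = f(a)\,f(v)\,f(a)$ still has equal first and last letter $f(a)$, so the second move applies to $f(w)$ and deletes the trailing $f(a)$ to yield $f(w')$. Thus in either case one reduction step on $w$ induces one reduction step on $f(w)$.

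Next I would iterate. Choosing any reduction sequence $w = w_0 \to w_1 \to \cdots \to w_k = \overline{w}$ realizing the reduction of $w$ and applying the single-step pushforward termwise produces a reduction sequence $f(w) = f(w_0) \to \cdots \to f(w_k) = f(\overline{w})$, so $f(\overline{w})$ is reachable from $f(w)$ by finitely many reduction moves. Finally, since the definition guarantees that the reduct is the unique minimal-length reduced word obtained by repeated reduction, any word reachable from $f(w)$ shares its reduct; in particular $\overline{f(\overline{w})} = \overline{f(w)}$, as claimed.

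I expect the only delicate point to be the pushforward of the second move, which is not a local move on subwords but a global comparison of the first and last letters of the whole word; it nonetheless goes through because $f$ carries the common first-and-last letter $a$ of $w$ to the common first-and-last letter $f(a)$ of $f(w)$. The appeal to confluence is likewise harmless, as uniqueness of the reduct is built into the definition of $\overline{(\,\cdot\,)}$, so no separate argument is needed.
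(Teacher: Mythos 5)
The paper gives no proof of this lemma (it is declared ``immediate'' along with the neighboring lemmas), and your argument---push each reduction move forward along $f$, which works because $f$ preserves equality of letters and hence both the adjacent-pair condition and the global first-equals-last condition, then invoke uniqueness of the reduct---is exactly the intended one. The only point where you lean slightly beyond the literal wording of the definition is the appeal to confluence (needed to conclude that the intermediate word $f(\overline{w})$, being reachable from $f(w)$ by reduction moves, has the same reduct as $f(w)$, rather than merely some reduced descendant), but you flag this explicitly and it is implicit in the paper's assertion that the reduct is a well-defined unique minimal-length word.
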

\begin{lemma}\label{lemma:reduce restrict reduce}
Let $w$ be a word on the alphabet $T$ and let $S$ be a subset of $T$ containing at least one letter from $w$. Then $\overline{\overline{w}|S}=\overline{w|S}$.
\end{lemma}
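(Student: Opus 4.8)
The plan is to reduce the claim to the behavior of restriction under a \emph{single} reduction move and then iterate. The engine is the uniqueness clause in the definition of reduction: since $\overline{u}$ is \emph{the} unique minimal word obtained from $u$ by reduction moves, any single reduction move $u \mapsto v$ satisfies $\overline{u} = \overline{v}$ (the reduced form of $v$ is a reduced word reachable from $u$, hence equal to $\overline{u}$). Thus ``having the same reduction'' is exactly the equivalence relation generated by single moves, and it suffices to prove the local statement: if $w \mapsto w'$ is a single reduction move, then $\overline{w|S} = \overline{w'|S}$. Granting this, I would fix a reduction sequence $w = w_0 \mapsto w_1 \mapsto \cdots \mapsto w_k = \overline{w}$ (finite, since lengths strictly decrease) and chain the local statement to obtain $\overline{w|S} = \overline{w_0|S} = \cdots = \overline{w_k|S} = \overline{\overline{w}|S}$. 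Here I would first note that reduction preserves the set of letters occurring in a word (each move deletes a repeated occurrence of a letter that still appears elsewhere), so $S$ meets every $w_i$ and all the restrictions are defined.

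For the local statement I would split into the two move types and, within each, into whether the active letter $a$ lies in $S$. For a move of the first type, $w = x\, a a\, y \mapsto x\, a\, y = w'$: if $a \notin S$ both occurrences are deleted and $w|S = w'|S$ \emph{literally}, while if $a \in S$ the two adjacent equal letters survive and stay adjacent, so $w|S = (x|S)\,aa\,(y|S) \mapsto (x|S)\,a\,(y|S) = w'|S$ is again a first-type move; either way $\overline{w|S} = \overline{w'|S}$. For a move of the second type, $w$ begins and ends with $a$ and $w'$ is $w$ with its final letter removed: if $a \notin S$ then all occurrences of $a$ vanish and again $w|S = w'|S$ literally; if $a \in S$ then the first and last letters of $w$, both equal to $a$, become the first and last letters of $w|S$, and deleting the final letter commutes with restriction, so $w'|S$ is exactly $w|S$ with its final $a$ removed, i.e.\ a valid second-type move on $w|S$. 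This last point needs the observation that $w|S$ has length at least two, which holds because the first and last occurrences of $a$ sit in distinct positions that both survive restriction.

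The main obstacle, such as it is, is the second (non-local) move: unlike the first-type move it is not a rewrite on a subword, so I must check separately that restriction commutes with ``delete the final letter'' and that the global condition ``first letter $=$ last letter $= a$'' is inherited by $w|S$ when $a \in S$. The first-type case and all the ``$a \notin S$'' cases are immediate equalities. The only genuinely external fact used is the uniqueness of the reduced form, which converts the one-move relations above into equalities of reductions and makes the chaining argument go through.
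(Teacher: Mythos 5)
Your argument is correct: reducing to a single reduction move via the uniqueness of the reduced form (which the paper builds into its definition), checking that moves preserve the letter set so all restrictions stay defined, and then verifying the four local cases is a complete proof, and your care with the non-local second move---that when $a\in S$ the restricted word still has equal first and last letters and length at least two---is exactly the one point that genuinely needs attention. The paper offers no proof at all (it declares this lemma immediate), so your write-up is a faithful filling-in of the intended elementary verification rather than a different route.
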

\begin{lemma}\label{lemma:map restrict commute}
Let $w$ be a word on the alphabet $R$, let $f$ be a map of sets $R\to S$, and let $T$ be a subset of $S$ containing at least one letter of $f(R)$. Then $f(w|f^{-1}(T))=f(w)|T.$
\end{lemma}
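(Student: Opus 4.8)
The plan is to reduce everything to bookkeeping of positions. Write $w = r_1 r_2 \cdots r_n$ with each $r_i \in R$, so that $f(w) = f(r_1) f(r_2) \cdots f(r_n)$ is the length-$n$ word on $S$ obtained by applying $f$ letter by letter. Both words in the claimed equality are subsequences of $f(w)$, and the strategy is to check that they are produced by keeping exactly the same set of positions and recording the same letter at each kept position, whence they must coincide as words.

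Carrying this out, first I would compute $f(w)|T$ directly: by definition of restriction it deletes every position $i$ with $f(r_i) \notin T$, so its set of kept positions is $K = \{\, i : f(r_i) \in T \,\}$, and the resulting word reads off the letters $f(r_i)$ for $i \in K$ in increasing order. Next I would compute the other side. The restriction $w|f^{-1}(T)$ deletes every position $i$ with $r_i \notin f^{-1}(T)$; but $r_i \in f^{-1}(T)$ is by definition the very same condition as $f(r_i) \in T$, so this restriction keeps precisely the positions in the same set $K$, producing the subword $r_{i_1} \cdots r_{i_k}$ over the increasing enumeration $i_1 < \cdots < i_k$ of $K$. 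Applying $f$ to this subword yields $f(r_{i_1}) \cdots f(r_{i_k})$, which is exactly what $f(w)|T$ spells out.

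The only real content, and the single place to be careful, is the elementary equivalence $r_i \in f^{-1}(T) \iff f(r_i) \in T$ that identifies the two sets of kept positions; everything else is just unwinding definitions. I would also record the definedness bookkeeping: both restrictions make sense exactly when $T$ contains at least one letter of $f(w)$, equivalently when $f^{-1}(T)$ contains at least one letter of $w$, and the hypothesis that $T$ meets $f(R)$ (applied to the letters actually occurring in $w$) guarantees this, so that $K$ is nonempty and neither side is an empty word. I expect no genuine obstacle here; the lemma is immediate once the position-tracking is set up.
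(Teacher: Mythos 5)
Your proof is correct. The paper gives no argument for this lemma at all --- it is one of three statements declared ``immediate'' --- and your position-tracking verification, hinging on the equivalence $r_i\in f^{-1}(T)\iff f(r_i)\in T$, is exactly the routine unwinding of definitions the paper intends, including the correct observation about when both restrictions are defined.
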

In general, we do not have $f(w|S)=f(w)|f(S)$ unless $S=f^{-1}f(S)$.

\subsection{The word cooperad}
Now we construct a cooperad spanned by a class of words. In Section~\ref{subsec noncrossing word cooperad}, we construct a second, closely related cooperad which will be our main point of interest. We work over the field $\fieldk$ with an unbiased definition for cooperads. See Appendix~\ref{appendix:coop} for details. As stated in the introduction, there is some relationship between the cooperads constructed here and the sequence and cactus operads. As the relationship is not entirely clear, the following is a self-contained presentation. There is a remark about the connection at the end of Section~\ref{section:words}.

\begin{defi}
The \emph{word species} is the species $\psp$ constructed as follows. To a finite set $S$, the functor $\psp$ assigns the $\fieldk$-vector space spanned by pangrammatic reduced words on $S$. We define the structure necessary to make this species a cooperad, the \emph{word cooperad} $\wordcooperad$, showing coassociativity in Proposition~\ref{prop: reduced words cooperad} below.

The decomposition map $\psp\to\psp\bcirc \psp$ can be specified, as discussed in the appendix, by defining $\Delta_f$ for each surjection $f:S\twoheadrightarrow T$. We define $\Delta_f$ as follows.
\[
\Delta_{f}(w) = 
\overline{f(w)}\otimes
\bigotimes_{t\in T} \overline{w|f^{-1}(t)}.
\]

The \emph{counit map} $\epsilon$, for $|S|=1$, takes the unique word in $\psp(S)$ to $1\in I(S)$.
\end{defi}
Checking equivariance with respect to both isomorphisms $S\to S'$ and isomorphisms $T\to T'$ under $S$ is straightforward, so the decomposition map $\Delta$ is well-defined.
\begin{example}
Let $S=\{a_1,a_2,a_3\}$ and let $w= a_1a_2a_1a_3$. Then the limit of interest can be specified in terms of five choices of $T$ and a surjection. $S\to T$. These are:
\begin{itemize}
\item the constant map $f_0:S\to \{b_0\}$,
\item the three maps $f_{ij}:S\to T_{ij}=\{b_{ij},b_k\}$ which take $a_i$ and $a_j$ to $b_{ij}$ and $a_k$ to $b_k$, and
\item the map $f_3=S\to T_3=\{b_1,b_2,b_3\}$ which takes $a_i$ to $b_i$.
\end{itemize}
Then $\Delta w$ is (represented by) the sum of $\Delta_{f_*}$ over these five choices of $f_*$. That is:

\begin{align*}
\Delta w &=& b_0 \otimes& \underbrace{w}_{b_0} 
\\&+& b_{12}b_3\otimes& \left(\underbrace{a_1a_2}_{b_{12}}\otimes \underbrace{a_3}_{b_3}\right)
\\&+& b_{13}b_2\otimes&\left(\underbrace{a_1a_3}_{b_{13}}\otimes \underbrace{a_2}_{b_2}\right)
\\&+& b_{1}b_{23}b_1b_{23}\otimes& \left(\underbrace{a_1}_{b_1}\otimes\underbrace{a_2a_3}_{b_{23}}\right)
\\&+& b_1b_2b_3\otimes& \left(\underbrace{a_1}_{b_1} \otimes \underbrace{a_2}_{b_2}\otimes \underbrace{a_3}_{b_3}\right).
\end{align*}

\end{example}
\begin{prop}\label{prop: reduced words cooperad}
The decomposition map and the counit map give $\wordcooperad=(\psp,\epsilon,\Delta)$ the structure of a cooperad.
\end{prop}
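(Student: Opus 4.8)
The plan is to verify the two defining axioms of an (unbiased) cooperad: the counit axioms and coassociativity. Before either, I would record that $\Delta_f$ genuinely lands in $\psp(T)\otimes\bigotimes_{t\in T}\psp(f^{-1}(t))$: since $w$ is pangrammatic and $f$ is surjective, $f(w)$ meets every letter of $T$, and reduction never deletes a letter that occurs elsewhere, so $\overline{f(w)}$ is a pangrammatic reduced word on $T$; the same reasoning applies to each $\overline{w|f^{-1}(t)}$. With this in hand the content is purely combinatorial.

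The counit axioms are immediate. For the terminal surjection $f\colon S\to\{\ast\}$ one has $\Delta_f(w)=\overline{f(w)}\otimes \overline{w|S}=\overline{f(w)}\otimes w$, where $\overline{f(w)}$ is a one-letter word; applying $\epsilon$ to the outer factor returns $w$. For the identity surjection $\id_S$ one has $\Delta_{\id_S}(w)=w\otimes\bigotimes_{s\in S}\overline{w|\{s\}}$, and each $\overline{w|\{s\}}$ is the single letter $s$, so applying $\epsilon$ to each inner factor again returns $w$.

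The real work is coassociativity. In the unbiased formulation this amounts to fixing a composite of surjections $S\xrightarrow{f}T\xrightarrow{g}U$ and checking that the two ways of iterating $\Delta$ agree in $\psp\bcirc\psp\bcirc\psp$: decomposing by $f$ and then applying $\Delta_g$ to the $\psp(T)$-cofactor, versus decomposing by the composite $gf$ and then applying the restricted map $f_u\colon (gf)^{-1}(u)\to g^{-1}(u)$ to each fiber. After full decomposition both sides produce one tensor factor in $\psp(U)$, one factor in $\psp(g^{-1}(u))$ for each $u\in U$, and one factor in $\psp(f^{-1}(t))$ for each $t\in T$, so it suffices to match these three families of factors. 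I would treat them in turn: the $\psp(U)$-factor is $\overline{g(\overline{f(w)})}$ on one side and $\overline{(gf)(w)}$ on the other, equal by Lemma~\ref{lemma:reduce function reduce}; the $\psp(g^{-1}(u))$-factor is $\overline{\overline{f(w)}|g^{-1}(u)}$ on one side and $\overline{f_u(\overline{w|(gf)^{-1}(u)})}$ on the other, which reduce (using Lemmas~\ref{lemma:reduce restrict reduce} and~\ref{lemma:reduce function reduce}) to $\overline{f(w)|g^{-1}(u)}$ and then to a common value via Lemma~\ref{lemma:map restrict commute} applied to $f$ and the subset $g^{-1}(u)\subseteq T$; and the $\psp(f^{-1}(t))$-factor is $\overline{w|f^{-1}(t)}$ on one side and $\overline{(w|(gf)^{-1}(u))|f^{-1}(t)}$ on the other, which agree because $f^{-1}(t)\subseteq (gf)^{-1}(u)$ whenever $t\in g^{-1}(u)$, so the nested restriction collapses to a single restriction (again cleaning up with Lemma~\ref{lemma:reduce restrict reduce}).

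The main obstacle is bookkeeping rather than ideas: every cofactor is a reduction of a restriction of an image of $w$, and one must repeatedly commute these three operations past one another in the correct order. The three lemmas are tailored precisely for this, so I expect the argument to reduce to assembling them; the one point demanding care is the nonlocal reduction move $a\ldots a\mapsto a\ldots$, and I would make sure that the invocations of Lemma~\ref{lemma:reduce function reduce} and Lemma~\ref{lemma:reduce restrict reduce} are applied as stated, to whole words rather than to subwords, so that this nonlocal move is correctly accounted for.
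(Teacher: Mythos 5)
Your proposal is correct and follows essentially the same route as the paper: coassociativity is checked factor-by-factor in $\psp\bcirc\psp\bcirc\psp$ by matching the $\psp(U)$-, $\psp(g^{-1}(u))$-, and $\psp(f^{-1}(t))$-cofactors using Lemmas~\ref{lemma:reduce function reduce}, \ref{lemma:reduce restrict reduce}, and~\ref{lemma:map restrict commute} in exactly the combinations the paper uses. You additionally spell out well-definedness and the counit axioms, which the paper states but omits verifying.
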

\begin{proof}
It suffices to show coassociativity holds separately on each individual factor in the limit making up $\psp\bcirc\psp\bcirc\psp$. Given a word $w$ in $S$ and surjections $S\stackrel{f}{\twoheadrightarrow} T\stackrel{g}{\twoheadrightarrow} U$, we have the following two compositions of decompositions:
\[
\left(\Delta_g\otimes \id\right)
\Delta_{f} (w)=
\overline{g(\overline{f(w)})}\otimes
\bigotimes_{u\in U}\overline{\overline{f(w)}|g^{-1}(u)}
\otimes
\bigotimes_{t\in T} \overline{w|f^{-1}(t)}
\]
and
\begin{multline*}
\left(\id\otimes\bigotimes_{u\in U}\Delta_{f|_{(gf)^{-1}(u)}}\right)
\Delta_{gf}(w)
\\=
\overline{gf(w)}\otimes
\bigotimes_{u\in U} \left(\overline{f(\overline{w|(gf)^{-1}(u)})}
\otimes\bigotimes_{t\in g^{-1}(u)} \overline{\overline{w|(gf)^{-1}(u)}|f^{-1}(t)}\right)
\\=
\overline{gf(w)}\otimes
\bigotimes_{u\in U} \overline{f(\overline{w|(gf)^{-1}(u)})}
\otimes\bigotimes_{t\in T} \overline{\overline{w|(gf)^{-1}(g(t))}|f^{-1}(t)}.
\end{multline*}
To show coassociativity, we will show that the terms in the product match up individually. This means that there are three easy verifications to make. First, it is a direct application of Lemma~\ref{lemma:reduce function reduce} that \[\overline{g(\overline{f(w)})}=\overline{gf(w)}.\]

Second, using Lemmas~\ref{lemma:reduce function reduce},~\ref{lemma:reduce restrict reduce}, and~\ref{lemma:map restrict commute}, we see 
\[
\overline{\overline{f(w)}|g^{-1}(u)}
=
\overline{f(w)|g^{-1}(u)}
=
\overline{f(w|(gf)^{-1}(u))}
=
\overline{f(\overline{w|(gf)^{-1}(u)})}.
\] 
Finally, using Lemma~\ref{lemma:reduce restrict reduce} again, we see that 
\[\overline{\overline{w|(gf)^{-1}(g(t))}|f^{-1}(t)}
=
\overline{\left(w|(gf)^{-1}(g(t))\right)|f^{-1}(t)}
=
\overline{w|f^{-1}(t)}.
\]
We omit the verification of counitality. 
\end{proof}
 \subsection{The non-crossing word cooperad}\label{subsec noncrossing word cooperad}
\begin{defi}
The {\em non-crossing species} $\Nn$ assigns to the set $S$ the $\fieldk$-vector space spanned by pangrammatic reduced non-crossing words on $S$. Similarly, the {\em crossing species} $\Xx$ assigns to $S$ the span of pangrammatic reduced crossing words on $S$.
\end{defi}
There is a natural inclusion of $\Xx$ into $\psp$ whose quotient is isomorphic to $\Nn$.
\begin{prop}\label{prop:noncrossing cooperad}
The quotient map $\psp\to \Nn$ makes the non-crossing species a quotient cooperad of the word cooperad.
\end{prop}
\begin{proof}
$\Xx(1)$ is zero dimensional so the counit descends to the quotient. 

Let $w$ be an arbitrary crossing word in the alphabet $S$. Then it is only necessary to show that $\Delta(w)$ is in the kernel of the map $\psp\bcirc \psp\to \Nn\bcirc\Nn$. 
The word $w$ contains the pattern $\ldots a\ldots b\ldots a\ldots b\ldots$ for distinct $a$ and $b$ in $S$. Consider $\Delta_{f}(w)$ for some surjection $f:S\to T$. If $f(a)\ne f(b)$ then $f(w)$ and hence its reduction $\overline{f(w)}$ is crossing. On the other hand, if $f(a)=f(b)$ then $f|f^{-1}f(a)$ and hence its reduction $\overline{f|f^{-1}f(a)}$ is crossing. Therefore $\Delta(w)$ is contained in $\Xx\bcirc \psp +\psp\bcirc \Xx$.
\end{proof}
\begin{defi}
We call $\noncrossingcooperad=(\Nn,\epsilon,\Delta)$, where $\epsilon$ and $\Delta$ are induced by the quotient map $\psp\to \Nn$, the \emph{non-crossing word cooperad}.
\end{defi}
The following is a direct calculation.
\begin{lemma}
Let $w$ be a pangrammatic non-crossing word on the alphabet $T$ and let $S$ be a subset of $T$. Then $w|S$ is non-crossing.
\end{lemma}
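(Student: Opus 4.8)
The plan is to argue by contrapositive, exploiting the fact that the crossing condition asks for a pattern to appear as a \emph{subsequence} rather than as a contiguous subword, together with the fact that restriction deletes letters while preserving the relative order of those it keeps. Concretely, I would show that if $w|S$ is crossing then $w$ itself is already crossing; since $w$ is assumed non-crossing, this forces $w|S$ to be non-crossing.

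First I would unwind the hypothesis that $w|S$ is crossing. By definition this means $w|S$ contains the pattern $\ldots a\ldots b\ldots a\ldots b\ldots$ for some distinct $a$ and $b$ in $S$; that is, there are four positions in $w|S$, read from left to right, occupied by $a$, $b$, $a$, $b$ in that order.

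The key step is to lift these four positions back to $w$. Since $a$ and $b$ both lie in $S$, they are among the letters that survive the restriction $w\mapsto w|S$, so none of the four letters witnessing the crossing is deleted. Because restriction only removes letters and never reorders the surviving ones, the relative order of these four occurrences in $w$ agrees with their relative order in $w|S$. Hence $w$ contains the subsequence $a, b, a, b$ in this order, i.e. $w$ exhibits the pattern $\ldots a\ldots b\ldots a\ldots b\ldots$ and is therefore crossing, completing the contrapositive.

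I expect no genuine obstacle here; the only point requiring care is conceptual rather than computational---namely, observing that ``crossing'' is a condition on subsequences (occurrences with arbitrary gaps) and not on contiguous subwords, so that deleting the letters of $T\setminus S$ can neither create nor, more to the point, destroy a crossing among letters of $S$. I would also remark that pangrammaticity of $w$ guarantees $w|S$ is defined (and pangrammatic on $S$) for every nonempty $S\subseteq T$, so that the statement is non-vacuous.
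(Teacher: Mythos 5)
Your argument is correct and is exactly the ``direct calculation'' the paper omits: a crossing pattern is a subsequence condition, restriction to $S$ preserves the relative order of surviving letters, and the witnesses $a,b$ lie in $S$, so any crossing in $w|S$ lifts to one in $w$. Nothing further is needed.
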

\begin{cor}\label{cor:characterization of nc Delta}
The decomposition map of the non-crossing word cooperad applied to the word $w$ is the limit of $\Delta^\mathrm{nc}_{f}(w)$, where $\Delta^\mathrm{nc}_f(w)$ is equal to $\Delta_f(w)$ if $f(w)$ is non-crossing and $0$ if $f(w)$ is crossing. 
\end{cor}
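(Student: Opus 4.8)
The plan is to unwind the definition of the induced decomposition map and thereby reduce the statement to a single fact about how reduction interacts with crossings. By Proposition~\ref{prop:noncrossing cooperad} the decomposition map of $\noncrossingcooperad$ is $\Delta$ composed with the quotient map $\psp\bcirc\psp\to\Nn\bcirc\Nn$, which is induced factorwise by the quotient $\psp\to\Nn$. Since $\Delta(w)$ is the limit of the terms $\Delta_f(w)$, it suffices to compute the image of each term $\Delta_f(w)=\overline{f(w)}\otimes\bigotimes_{t\in T}\overline{w|f^{-1}(t)}$, for a pangrammatic reduced non-crossing word $w$ on $S$ and a surjection $f:S\twoheadrightarrow T$; this image is the tensor product of the classes in $\Nn$ of the individual factors.

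First I would dispose of the lower factors. Because $w$ is non-crossing, the preceding lemma shows that each restriction $w|f^{-1}(t)$ is non-crossing, and deleting letters can never create the forbidden pattern $\ldots a\ldots b\ldots a\ldots b\ldots$, so $\overline{w|f^{-1}(t)}$ is again non-crossing; it remains pangrammatic on $f^{-1}(t)$ and reduced, hence is a basis vector of $\Nn(f^{-1}(t))$ and in particular nonzero. Consequently the class of $\Delta_f(w)$ vanishes if and only if the class of the remaining factor $\overline{f(w)}$ vanishes, that is, if and only if $\overline{f(w)}$ is crossing; and when it is nonzero it is exactly $\Delta_f(w)$ as written, since the quotient fixes every non-crossing word.

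It then remains to replace the condition ``$\overline{f(w)}$ is crossing'' by ``$f(w)$ is crossing'', which is the assertion that reduction preserves the crossing/non-crossing dichotomy. The easy half, already used above, is that deleting letters cannot introduce a crossing, so the reduction of a non-crossing word is non-crossing. The substantive half is that the reduction of a crossing word stays crossing, which I would prove one elementary move at a time. For the local move $aa\mapsto a$, any crossing pattern uses at most one of the two adjacent copies of $a$, since its two alternating letters are distinct, and the surviving adjacent copy can replace the deleted one without changing the relative order of the four chosen positions, so the pattern persists. The delicate case, and the main obstacle, is the non-local move $a\cdots a\mapsto a\cdots$ that deletes the terminal letter: this is not a rewrite of a subword, so it must be handled positionally. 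Here a crossing pattern that is destroyed must use the terminal $a$ as its last position, hence has the form $c,a,c,a$ at positions $p_1<p_2<p_3<p_4=n$ with $c\ne a$; but the first letter of the word is also $a$, occurring at a position strictly before $p_1$, so the positions $1,p_1,p_2,p_3$ realize the crossing pattern $a,c,a,c$ and avoid the deleted terminal letter. Thus crossing is preserved under each move, hence under the full reduction, and the claimed description of $\Delta^{\mathrm{nc}}_f$ follows.
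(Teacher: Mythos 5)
Your proof is correct and follows essentially the same route as the paper, which states this corollary as an immediate consequence of the preceding lemma on restrictions of non-crossing words. The only place you go beyond the paper is in carefully verifying that reduction preserves the crossing property (including the non-local move $a\cdots a\mapsto a\cdots$), a point the paper treats as evident (cf.\ ``and hence its reduction'' in the proof of Proposition~\ref{prop:noncrossing cooperad}); your argument there is sound.
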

\begin{remark*}
Both of the operads constructed here clearly have some relationship to the sequence operad~\cite{McClureSmith:MCOLNC} and cactus operad~\cite{Voronov:NUA,Kaufmann:OSVCTR}. This is perhaps easiest to see with the very clean presentation in~\cite{GalvezCarrilloLombardiTonks:AIOSC}. There the authors describe two operads whose underlying species differ from those considered here only by allowing words to begin and end with the same letter. 

From either a cactus or sequence perspective, the subspecies specified by this additional condition forms a suboperad. For surjections, which are described combinatorially, the condition itself probably gives the best description. For cacti, one can say that it is the suboperad of cellular chains of spineless cacti where the global root coincides with some intersection of lobes. 

Based on this, a naive guess might be that the cooperad here is the dual of the appropriate suboperad. However, the decomposition is \emph{not} dual to the composition map of sequences or cacti and indeed does not even respect the grading of the operations or cells. So the relationship, should it exist, must be subtler than that. Ben Ward has pointed out that the suboperad of ``generic'' cacti, where no more than two cactus lobes can meet at a point, is dual to an appropriately defined subcooperad of the non-crossing word cooperad.
\end{remark*}
\section{Review of (homotopy) probability theory}
This section consists of the glue directly connecting what we have set up to our main application. First we review an operadic framework for homotopy probability theory, and then recall the {free cumulants}, which govern free independence in non-commutative probability theory. 
\subsection{Review of homotopy probability theory}
We recall in a few words the setup of homotopy probability theory in operadic terms. Again, see Appendix~\ref{appendix:coop} for details about the conventions for operads and cooperads. 

Homotopy probability theory was introduced by Park~\cite{Park:L} as a simplification of his algebraic model for quantum field theory where Planck's constant plays no role. The most complete reference is Park's monograph~\cite{Park:HTPSICIHLA}, which differs in both notation and definitions from this paper but agrees in spirit with what is here.

One of Park's motivations was to generalize and properly axiomatize (algebraic) probability spaces in terms of homotopy algebra. The following is a ``classical'' definition before generalization (see, for example,~\cite{NicaSpeicher:LCFP}).
\begin{defi}
A \emph{non-commutative probability space} (respectively, a \emph{commutative algebraic probability space}) is a unital associative (unital commutative associative) $\fieldk$-algebra $V$ equipped with a unit-preserving linear map $E$ from $V$ to $\fieldk$. We assume no further compatibility between the linear map and the algebra structure. The elements of $V$ are called \emph{random variables} and the map $E$ is called the \emph{expectation}.
\end{defi}
\begin{remark*}Since commutative algebraic probability spaces most typically arise as measurable functions on a measure space they are often defined to satisfy additional analytic properties that we will ignore here. See e.g.,~\cite{Tao:APS}.
\end{remark*}
Two basic ingredients of the motivation to generalize this definition come from physics, where the random variables are the \emph{observables} in a quantum field theory. 

First of all, usually a field theory possesses physical symmetries. For symmetries of the classical action, this is an old and well-known part of the BV-BRST formalism that can be dealt with by introducing so-called ghosts. This amounts to replacing the linear space of observables with a chain complex. 

There is another kind of symmetry that may come into play, namely symmetry of the expectation. In particular, we only expect closed elements in the complex to be observables, and we expect boundaries in the chain complex to be trivial observables (in well-behaved cases, the converse should also be true, at least morally). This symmetry of the expectation is probably less understood and analyzed in these terms than symmetry of the action. See~\cite[Section 6]{Park:HTPSICIHLA} for some discussion of this point. 

In the following definition, a unital version of a definiton in~\cite{DrummondColeParkTerilla:HPTI}, we stick to the associative framework, but there is clearly a commutative variation.

\begin{defi}
A \emph{unital associative homotopy probability space} is a unital graded associative $\fieldk$-algebra equipped with a differential which kills the unit and a unit-preserving chain map to the ground field.
\end{defi}
A unital associative homotopy probability space concentrated in degree zero is precisely a non-commutative probability space as defined above.

However, this definition cannot capture the full subtlety of the observables in a quantum field theory. Usually, the symmetries of the action are not be compatible with the product, so that the product of observables may not be observables (the product of closed elements may not be closed). Instead, the product may need to be ``corrected'' in some way to be fully defined. Homotopy probability theory can be traced back to Park's observation of this problem and a potential solution for it in~\cite{Park:FFQFTQDA}. 

 One way to deal with the problem of correcting the classical product is via homotopy algebra, which gathers together these corrections into a coherent package. But this leads naturally to an algebraic generalization where there is not a single product out of which many products can be built, but rather a binary product, an independent trilinear product, and so on. Again, this point of view is espoused at much greater length and in more detail in~\cite{Park:HTPSICIHLA}. Following Park, here we take a broad view and treat this system of corrections as a black box, defining the algebraic structure as minimally as possible.

The following definition defines our spaces of random variables or observables along with mock products, which basically don't need to satisfy any algebraic identities or respect the differential. See~\ref{subsec:auto} for the definition of strong coaugmentation and the notation below. 
\begin{defi}\label{defi:CCor}
Let $\Cc=(\coopspecies,\epsilon,\Delta)$ be a strongly coaugmented cooperad. A \emph{$\Cc$-correlation algebra} is a chain complex $V$ equipped with a degree zero linear map (not necessarily a chain map) $\varphi_V:{\coopspecies}\circ V\to V$ such that, for $|S|=1$, we have 
\[
 V\cong \coopspecies_S\circ V\to\coopspecies\circ V\xrightarrow{\varphi_V}V
 \] is the identity.
\end{defi}
Next, we encode the expectation.
\begin{defi}\label{defi:CProb}
Let $\Cc=(\coopspecies, \epsilon,\Delta)$ be a cooperad. Fix a $\Cc$-correlation algebra $\groundthing$. An \emph{$\groundthing$-valued homotopy $\Cc$-probability space} is a $\Cc$-correlation algebra $(V,\varphi_V)$ equipped with
\begin{enumerate}
\item a map $\eta$ of chain complexes $\groundthing\to V$, called the \emph{unit}, such that $\varphi_V\circ \Cc\eta=\eta\circ \varphi_\groundthing$ and
\item a map $E$ of chain complexes from $V$ to $\groundthing$, called the \emph{expectation}, such that $E\circ \eta=\id_\groundthing$. 
\end{enumerate}
The conditions on the maps $\eta$ and $E$ are equivalent to the commutativity of the following diagram:
\[\xymatrix{
\coopspecies\circ\groundthing\ar[d]_{\coopspecies\circ\eta}\ar[r]^{\varphi_\groundthing}
&\groundthing \ar[d]_\eta\ar[r]^{\id_\groundthing}
&\groundthing
\\
\coopspecies\circ V\ar[r]_{\varphi_V}& V\ar[ur]_E	
}
\]
\begin{remark*}Definitions~\ref{defi:CCor}~and~\ref{defi:CProb} provide definitions for homotopy probability theory over an arbitrary cooperad. The case of the coassociative cooperad was addressed in~\cite{DrummondColeParkTerilla:HPTI}; the case of the cocommutative cooperad was addressed in \cite{DrummondColeParkTerilla:HPTII,DrummondColeTerilla:CIHPT}. The specialization of the definition given here to the appropriate cooperads is \emph{not} equivalent to the definitions given there. Rather, the definition here is more general. See Remark 2 of~\cite{DrummondColeParkTerilla:HPTII}. Park~\cite{Park:HTPSICIHLA} addresses the case of the cocommutative cooperad at a roughly comparable level of generality. 

This article is only intended to establish a relationship between the noncrossing word cooperad and free cumulants. It is not intended to establish a full homotopy probability theory in the free setting. Because of this, the recollection below may be too terse for some. Therefore, regardless of any differences in definitions, the interested or puzzled reader is advised to consult the references above (especially the monograph~\cite{Park:HTPSICIHLA}) for more details about homotopy probability theory.
\end{remark*}

Now let $V$ be an $\groundthing$-valued homotopy $\Cc$-probability space. The \emph{cumulant morphism} is the $\Cc$-coalgebra map $\tilde{K}$ (or its adjoint $K:\coopspecies\circ V\to \groundthing$) that fits into the following diagram of $\Cc$-coalgebras (well-defined because $\tilde{\varphi}_{\groundthing}$ is an automorphism by Lemma~\ref{lemma: identify coalgebra iso}):
\begin{equation}\label{eq: definecumulants}\begin{gathered}
\xymatrix{
\Cc\circ \groundthing\ar[r]^{\tilde{\varphi}_{\groundthing}}
&
\Cc\circ\groundthing
\\
\Cc\circ V\ar@{.>}[u]^{\tilde{K}}\ar[r]_{\tilde{\varphi}_V}
&
\Cc\circ V\ar[u]_{\tilde{E}=\Cc\circ E}.
}
\end{gathered}\end{equation}
\end{defi}
\begin{example}\label{ex:BoolCum} 
\begin{enumerate}
\item We reinterpret a unital associative homotopy probability space $(V,\eta,E)$ in our current framework. Since the underlying species of $\ass$ and $\coass$ are the same, the associative algebra structure map ${\assspecies\circ \fieldk\to\fieldk}$ makes $\fieldk$ into a $\coass$-correlation algebra (and similarly for $V$).

Because the unit $\eta$ is an algebra map and the expectation $E$ respects $\eta$, the conditions of Definition~\ref{defi:CProb} are satisfied and we thus have the data of a $\fieldk$-valued homotopy $\coass$-probability space. The cumulant morphism $K$ is made up of the so-called \emph{Boolean cumulants} of the non-commutative (homotopy) probability space. That is, $K_{[n]}$ is the $n$th Boolean cumulant. Thisis essentially the main example of~\cite{DrummondColeParkTerilla:HPTI}.
\item Now assume $V$ is as above but also commutative. Then it is a commutative homotopy probability space in the sense of~\cite{DrummondColeParkTerilla:HPTII}. Again this is supposed to generalize a classical definition. If $V$ is concentrated in degree zero and satisfies two simple inequalities, then it is an algebraic probability space in the sense of~\cite{Tao:APS}. 

As above, the identification of the underlying species of $\comm$ and $\cocomm$ gives maps $\varphi_\fieldk$ and $\varphi_V$ which are defined as in the previous example: ${\commspecies\circ \fieldk\to \fieldk}$ (and likewise for $V$). Altogether then, this is the data of a $\fieldk$-valued homotopy $\cocomm$-probability space. The cumulant morphism $K$ encapsulates the so-called \emph{classical cumulants} of the classical algebraic (or homotopy commutative) probability space. This is essentially the main example of~\cite{DrummondColeParkTerilla:HPTII}.
\end{enumerate}
\end{example}
\begin{remark*}
The cumulants of a probability space (whether classical, Boolean, or free) can be defined combinatorially in terms of M\"obius inversion using an appropriate poset of partitions. One can view the encapsulation of the cumulants of a probability space in terms of operadic algebra as a sort of algebraic enrichment of this combinatorial data, where the choice of cooperad corresponds to the choice of appropriate type of partition. 
\end{remark*}
\subsection{Review of free cumulants}
Independence is a critical feature in probability theory. Cumulants gather the information of a probability space in a way that facilitates the study of independence; the cumulant of a sum of independent random variables is the sum of the individual cumulants. The correct notion for independence in many non-commutative contexts is \emph{free independence}, discovered by Voiculescu~\cite{Voiculescu:SSRFPCSA} (or see the historical survey~\cite{Voiculescu:FPVNAFG}) and studied by many others since then.  We briefly recall free cumulants. See~\cite{NovakSniady:WFC} for a quick overview and~\cite{NicaSpeicher:LCFP} for a more detailed introduction to free cumulants and their connection to free probability theory in general. 
\begin{defi}
A \emph{non-crossing partition} of $N$ is a surjective map $f$ from $[n]$ to $[k]$ such that:
\begin{enumerate}
\item (ordering) if $i<j$ then $\min \left(f^{-1}(i)\right)<\min \left(f^{-1}(j)\right)$ and
\item (non-crossing) $f(1,2,\ldots N)$ is a non-crossing word in $[k]$.
\end{enumerate}
We call $k$ the \emph{size} of $f$.
\end{defi}
\begin{defi}(\cite[11.1]{NicaSpeicher:LCFP})
Let $V$ be a unital $\fieldk$-algebra, let $(\rho_n)_{n\ge 1}$ be a sequence of functionals $V^{\otimes n}\xrightarrow{\rho_n}\fieldk$, and let $f$ be a non-crossing partition of $N$ of size $k$. Then the \emph{multiplicative extension} $\rho_f:V^{\otimes N}\to \fieldk$ is defined as 
\[
\rho_f(a_1\otimes\cdots\otimes a_n)=\prod_{i=1}^k\rho_{|f^{-1}(i)|}(\ul{a_{f^{-1}(i)}}).
\]
Here $\ul{a_{f^{-1}(i)}}$ is the tensor product $a_{j_1}\otimes \cdots\otimes a_{j_{|f^{-1}(i)|}}$ where $j_1,\ldots, j_{|f^{-1}(i)|}$ is the restriction $a_1,\ldots, a_n|f^{-1}(i)$.
\end{defi}
\begin{defi}(\cite[11.4 (3)]{NicaSpeicher:LCFP})
let $(V,E)$ be a non-commutative probability space. The \emph{free cumulants} of $V$ are the unique functions $\{\kappa_N\}$ whose multiplicative extension satisfies the defining equation 
\[
E(a_1\cdots a_N) = \sum_f \kappa_f(a_1\otimes \cdots \otimes a_N)
\]
as $f$ ranges over non-crossing partitions.
\end{defi}
\section{The non-crossing word cooperad and free probability theory}
Finally, we relate non-commutative probability spaces to $\noncrossingcooperad$-correlation algebras and homotopy $\noncrossingcooperad$-probability spaces and show that the cumulant morphism of a $\fieldk$-valued homotopy $\noncrossingcooperad$-probability space recovers the free cumulants defined above.

\begin{defi}
We define a map $\psi$ of species from the underlying species $\Nn$ of the non-crossing words cooperad to the underlying species $\assspecies$ of the associative operad (defined in Example~\ref{example:operaddefinitions}). Under the map $\psi$, a word $w$ in the letters $\{w_1,\ldots, w_{|S|}\}$ goes to the order $f_w$ where $f_w(w_i)=j$ if the subword of $w$ which ends with the first occurence of $w_i$ in $w$ contains $j$ letters from the alphabet. 
\end{defi}
Now, as in the first example above, let $V$ be a unital associative homotopy probability space.

We can give both $\fieldk$ and $V$ the structure of $\noncrossingcooperad$-correlation algebras by composing the map $\psi$ with the structure maps of the associative algebras $V$ and $\fieldk$:
\begin{align*}
 \Nn\circ V\xrightarrow{\psi}\assspecies\circ V\xrightarrow{\text{structure}} V,\\ 
 \Nn\circ \fieldk\xrightarrow{\psi}\assspecies\circ \fieldk\xrightarrow{\text{structure}} \fieldk.
 \end{align*} 
As before, since the map $E$ preserves the unit and the unit is a map of associative algebras, they are compatible with this structure and the whole package is then the data of a $\fieldk$-valued homotopy $\noncrossingcooperad$-probability space.

Now we are ready for the main theorem.
\begin{thm}\label{thm: freever}
Let $(V,E)$ be a non-commutative probability space, viewed as above as a $\fieldk$-valued homotopy $\noncrossingcooperad$-probability space.

Then the cumulant morphism $K$ recovers the free cumulants of the probability space. 
\end{thm}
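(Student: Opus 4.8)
The plan is to show that the single defining diagram~\eqref{eq: definecumulants}, once passed to adjoints (projected onto the cogenerators $\fieldk$) and evaluated on a distinguished family of words, reproduces verbatim the moment--cumulant equation characterizing the free cumulants; the theorem then follows from the uniqueness clause in the definition of the $\kappa_N$. Concretely, for each $N$ let $w_N = 12\cdots N$ be the increasing (pangrammatic, reduced, non-crossing) word on $[N]$, and set $\kappa_N := K_{[N]}(w_N; -)\colon V^{\otimes N}\to\fieldk$. I claim these are the free cumulants. Observe first that under $\psi$ the word $w_N$ maps to the identity order on $[N]$, so $\varphi_V(w_N; a_1,\ldots,a_N)=a_1\cdots a_N$; more generally, for any block $B\subseteq[N]$ the restriction $\overline{w_N|B}$ is again the increasing word on $B$, whence by equivariance of the species structure $K(\overline{w_N|B};-)$ is a genuine copy of $\kappa_{|B|}$ applied to the inputs indexed by $B$ in increasing order, exactly as in the multiplicative extension $\kappa_f$.

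First I would extract the operadic moment--cumulant relation. Passing to adjoints (projection onto the cogenerators $\fieldk$) in the commuting square~\eqref{eq: definecumulants}, the right-hand side $(\noncrossingcooperad\circ E)\circ\tilde\varphi_V$ has adjoint $E\circ\varphi_V$, since projecting $\noncrossingcooperad\circ E$ to cogenerators is $E$ applied after projection to $V$, and the adjoint of $\tilde\varphi_V$ is by construction $\varphi_V$. Dually the left-hand side has adjoint $\varphi_\fieldk\circ\tilde K$, the adjoint of $\tilde\varphi_\fieldk$ being $\varphi_\fieldk$. As two coalgebra maps into a cofree coalgebra agree iff their adjoints agree, the diagram is equivalent to the identity $\varphi_\fieldk\circ\tilde K = E\circ\varphi_V$ of maps $\noncrossingcooperad\circ V\to\fieldk$, and evaluating the right-hand side on $(w_N; a_1,\ldots,a_N)$ gives precisely $E(a_1\cdots a_N)$.

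Next I would expand the left-hand side. Using the cofree-coalgebra adjunction recalled in the appendix, $\tilde K = (\noncrossingcooperad\circ K)\circ(\Delta\circ\id_V)$, so it remains to compute $\Delta(w_N)$. By Corollary~\ref{cor:characterization of nc Delta} the surviving summands $\Delta^{\mathrm{nc}}_f(w_N)$ are indexed by surjections $f$ with $f(w_N)$ non-crossing; modulo relabeling the target these are exactly the non-crossing partitions $\pi$ of $N$, the ordering condition in that definition being the canonical choice of representative and the non-crossing condition on $f(1)\cdots f(N)$ being the word-theoretic one. For such a $\pi$ the summand is $\overline{f(w_N)}\otimes\bigotimes_{B\in\pi}\overline{w_N|B}$. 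Applying $\noncrossingcooperad\circ K$ replaces each factor $\overline{w_N|B}$ by $\kappa_{|B|}(\ul{a_B})$, and $\varphi_\fieldk$ then multiplies these scalars---the outer word $\overline{f(w_N)}$ being irrelevant because $\fieldk$ is commutative, so $\varphi_\fieldk$ is just multiplication. Hence the left-hand side equals $\sum_\pi\prod_{B\in\pi}\kappa_{|B|}(\ul{a_B})=\sum_\pi\kappa_\pi(a_1\otimes\cdots\otimes a_N)$. Equating the two sides yields $E(a_1\cdots a_N)=\sum_\pi\kappa_\pi(a_1\otimes\cdots\otimes a_N)$, and uniqueness of the free cumulants identifies each $\kappa_N$ with the $N$th free cumulant.

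The main obstacle I expect is the bookkeeping in the third step: matching the cooperadic decomposition of $w_N$ to the set of non-crossing partitions and checking that all order conventions align---that $\psi$ sends each restricted word $\overline{w_N|B}$ to the order used in the multiplicative extension $\kappa_f$, and that equivariance lets one read off $K(\overline{w_N|B};-)$ as a bona fide copy of $\kappa_{|B|}$, so that the family $(\kappa_N)$ is self-consistent across block sizes. The adjoint computations of the second step are routine once the cofree-coalgebra formalism of the appendix is in place, and commutativity of $\fieldk$ disposes of any dependence on the surviving outer word $\overline{f(w_N)}$.
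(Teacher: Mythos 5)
Your proposal is correct and follows essentially the same route as the paper's proof: pass to adjoints on the defining square, evaluate on the increasing word $w_N$, use Corollary~\ref{cor:characterization of nc Delta} to identify the surviving terms of $\Delta(w_N)$ with non-crossing partitions, and recognize the resulting sum as the moment--cumulant relation, concluding by uniqueness. The only cosmetic difference is that you dispose of the outer word $\overline{f(w_N)}$ by commutativity of $\fieldk$, whereas the paper notes that $\psi$ sends it to the identity order; both are fine.
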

\begin{proof}
Consider the defining diagram~(\ref{eq: definecumulants}) of the cumulant morphism. By adjunction into vector spaces (or chain complexes), we may restrict the right half of the diagram without losing information, as follows.
\[
\begin{gathered}
\xymatrix{
\noncrossingcooperad\circ \fieldk\ar[r]^{\tilde{\varphi}_{\fieldk}}
&
\noncrossingcooperad\circ\fieldk
\\
\noncrossingcooperad\circ V\ar@{.>}[u]^{\tilde{K}}\ar[r]_{\tilde{\varphi}_V}
&
\noncrossingcooperad\circ V\ar[u]_{\tilde{E}}
}
\end{gathered}
\qquad\Leftrightarrow\qquad
\begin{gathered}
\xymatrix{
\Nn\circ \fieldk\ar[r]^{{\varphi}_{\fieldk}}
&
\fieldk
\\
&V\ar[u]_{E}
\\
\Nn\circ V\ar@{.>}[uu]^{U\tilde{K}}\ar[r]_{U\tilde{\varphi}_V}
&
\Nn\circ V\ar[u]
}
\end{gathered} \qquad\Leftrightarrow\qquad 
\begin{gathered}
\xymatrix{
\Nn \circ \fieldk\ar[r]^{{\varphi}_{\fieldk}}
&
\fieldk
\\
\Nn\circ V\ar@{.>}[u]^{U\tilde{K}}\ar[r]_{\varphi_V}&V\ar[u]_{E}.
}
\end{gathered}
\]
Let $w_N$ be the word $1,\ldots, N$ in the alphabet $[N]$. Define $K_N:V^{\otimes N}\to \fieldk$ in terms of the cumulant morphism as 
\[K_N(z)=K(w_N\otimes z).\]
We will show that the map $K_N$ is precisely the $N$th free cumulant map.

Apply the maps making up the rightmost commutative square to the element of $\Nn\circ V$ represented by $w_N\otimes (v_1\otimes\cdots\otimes v_N)$. The map $\varphi_V$ is just multiplication and so the composition on the bottom and right sides of the square is 
\[
E(v_1\cdots v_N).
\]
Recall the vertical map $\tilde{K}$ is defined as the extension of the cumulant morphism $K: \Nn\circ V\to \fieldk$ as follows:
\[
\xymatrix{
	\Nn\circ V\ar[r]^{U\tilde{K}}\ar[d]_{\Delta_\noncrossingcooperad} &\Nn\circ\fieldk\\
 (\Nn\bcirc \Nn)\circ V\ar[r]_\cong&\Nn\circ (\Nn\circ V)\ar[u]_{\Nn\circ K}.
}
\]
Since $\Nn\circ (\Nn\circ V)$ and $\Nn\circ V$ are defined as colimits (see~Appendix~\ref{appendix:coop}), in order to evaluate the overall composition $\Nn\circ V\xrightarrow{U\tilde{K}}\Nn\circ\fieldk\xrightarrow{\varphi_\fieldk}\fieldk$, it suffices to evaluate on a choice of representatives. That is, let $S$ be the (finite) set of surjections $f$ from $[N]$ to $[M]$ such that $i<j$ implies $\min f^{-1}(i)< \min f^{-1}(j)$ (this set exhausts the isomorphism classes of surjections out of $[N]$). Then the following diagram commutes. The diagram may look intimidating but the right hand side is precisely what we are trying to compute while the left hand side just gives a concrete recipe for the calculation. 
\[
\xymatrix{
\Nn[N]\otimes V^{\otimes [N]}\ar[d]\ar[r]& \Nn\circ V\ar[d]\ar@{=}[r]&\Nn\circ V\ar[ddd]^{U\tilde{K}}
\\
\displaystyle\prod_S \left(\Nn([M])\otimes \bigotimes_{t\in [M]}\Nn(f^{-1}(t))\right)\otimes  V^{\otimes [N]} \ar[d]\ar[r]&(\Nn\bcirc \Nn)\circ V\ar[d]
\\
\displaystyle\bigoplus_S \left(\Nn([M])\otimes \bigotimes_{t\in [M]}\left(\Nn(f^{-1}(t))\otimes V^{\otimes f^{-1}(t)}\right)\right)\ar[r]\ar[d] & \Nn\circ(\Nn\circ V)\ar[d]^{\Nn\circ K}
\\
\displaystyle\bigoplus_S \left(\Nn([M])\otimes \fieldk^{\otimes [M]}\right)\ar[r]\ar[d] & \Nn\circ \fieldk\ar@{=}[r]\ar[d]^\psi& \Nn\circ\fieldk\ar[dd]^{\varphi_{\fieldk}}
\\
\displaystyle\bigoplus_S \left(\assspecies([M])\otimes \fieldk^{\otimes [M]}\right)\ar[d]\ar[r]& \assspecies\circ \fieldk\ar[d]
\\
\fieldk\ar@{=}[r]&\fieldk\ar@{=}[r]&\fieldk 
}
\]
Using the characterization from Corollary~\ref{cor:characterization of nc Delta}, we see that the contribution is $0$ for a function $f$ from $[N]$ to $[M]$ if $f(w_N)$ is crossing. Then the subset of functions from $[N]$ to $[M]$ which contribute to the overall composition coincides precisely with the set of functions from $[N]$ to $[M]$ which are non-crossing partitions.

For a given partition $f$, let us trace the contribution from the $f$ factor in the left side composition. Explicitly, starting with $w_N\otimes V^{\otimes [N]}$, the first vertical map, restricted to the $f$ factor takes this to \[\overline{f(w_N)}\otimes \bigotimes_{t\in [M]} \overline{w_N|f^{-1}(t)}\otimes V^{\otimes [N]}.\]
The second vertical map is just a change of parenthesization on the factor. 

The third vertical map applies $K$ to the factors $\overline{w_N|f^{-1}(t)}\otimes V^{\otimes f^{-1}(t)}$. Because there is no repeated letter in $w_N$, the reduction is trivial, and we can identify $\overline{w_N|f^{-1}(t)}$ with $w_N|f^{-1}(t)$. Then there is an order-preserving isomporphism between $f^{-1}(t)$ and $[|f^{-1}(t)|]$ which realizes $K(\overline{w_N|f^{-1}(t)}\otimes V^{\otimes f^{-1}(t)})$ as 
\[K_N(\ul{
w_N|f^{-1}(t)\otimes V^{\otimes f^{-1}(t)}
}).\]
By construction the map $\psi$ takes $f(w_N)$ to the identity order $[M]\to [M]$ and the final map in the vertical composition is then just the ordered product of the factors corresponding to $f^{-1}(t)$ for $t$ in $[M]$. This product is then
\[\prod_{t=1}^M K_N(\ul{
w_N|f^{-1}(t)\otimes V^{\otimes f^{-1}(t)}
})\]
which is precisely the multiplicative extension of $K_f$ of $(K_1,K_2,\ldots)$.

Thus the overall equation is then 
\[
E(v_1\cdots v_N) = \sum_f K_f(v_1\otimes\cdots\otimes v_N)
\]
which demonstrates that $K_N$ satisfy precisely the same definining equations as the free cumulants $\kappa_N$.
\end{proof}
To conclude the main body, we make two caveats about this approach.
\begin{remark*}\label{remark: problems with this paper}
\begin{enumerate}
\item First of all, this theorem only makes use of the cumulant morphism for very special non-crossing words, those of the form $w_N=1,\ldots, N$. This means that there are many other ``cumulants'' in this context, not only the free cumulants. For example, applying the same methods with the word $w'_N=1,2,\ldots, N-1,N,N-1,\ldots, 3,2$ yields the Boolean cumulants of the same non-commutative probability space. This may be seen either as a feature (flexibility in the method) or a bug (imprecision in the output).
\item More damning is the fact that this method does not seem to work at all in \emph{operator-valued} free probability, where the ground ring is itself non-commutative. In our case, the right hand side of the formula relating expectations and cumulants was a product of individual cumulants $\kappa_n$. But in operator-valued free probability, the right hand side includes nested cumulants, like $\kappa_2(a\kappa_1(b)\otimes c)$. This kind of ``tree-like'' formula does not fit well in this formulism. However, operadic algebra is tailored to describe tree-like compositions and there is a somewhat different and more technical approach using these tools that works in the more general case. This will be studied elsewhere~\cite{DrummondCole:OAOVFC}.
\end{enumerate}
\end{remark*}
\appendix
\section{Unbiased operads and cooperads}\label{appendix:coop}
We will use an unbiased definition for operads and cooperads, as it significantly reduces the notation necessary to describe our structures at the cost of requiring a few explicit definitions rather than a reference. There are several distinct issues that one faces with cooperadic algebra in full generality, related to issues like conilpotency, $0$-ary operations, and the ``handedness'' of the categories we generally work in. We will make several strong simplifying assumptions to avoid the most obvious pitfalls.

Let $\category$ be either the category of vector spaces, the category of graded vector spaces, or the category of chain complexes over $\fieldk$. We consider vector spaces as graded vector spaces concentrated in degree zero and graded vector spaces as chain complexes with zero differential without further comment.

\subsection{Species and plethysm}
\begin{defi}
A linear {\em species} is a functor from finite sets and their isomorphisms to $\category$. A species is \emph{reduced} if it takes value $0$ on the empty set. 

All species will be linear in this paper. 

The \emph{unit species} $\Ii$ has $\Ii(S)=\fieldk$ if $|S|=1$ and $\Ii(S)=0$ otherwise, with the identity for every nonzero morphism.

The \emph{coinvariant composition} or \emph{coinvariant plethysm} of two species $\speciesa$ and $\speciesb$ is the species $\speciesa\circ \speciesb$ given by
\[
(\speciesa\circ \speciesb)(S)=\colim_{S\xrightarrow{f} T} \left(\speciesa(T)\otimes \bigotimes_{t\in T}\speciesb(f^{-1}(t))\right).\]
The \emph{invariant composition} or \emph{invariant plethysm} of two species $\speciesa$ and $\speciesb$ is the species $\speciesa\bcirc \speciesb$ given by
\[
(\speciesa\bcirc \speciesb)(S)=\lim_{S\xrightarrow{f} T}\left(\speciesa(T)\otimes \bigotimes_{t\in T}\speciesb(f^{-1}(t))\right).\]
In both cases the limits and colimits are taken over the diagram category whose objects are maps out of $S$ and whose morphisms are isomorphisms under $S$.
\end{defi}

\begin{lemma}\label{lemma: intertwines the plethysms}
Let $\speciesa$ be a species and let $\speciesb$ be a reduced species. Then there is an isomorphism between $\speciesa\bcirc \speciesb$ and $\speciesa\circ \speciesb$, defined below.
\end{lemma}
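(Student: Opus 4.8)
The plan is to exploit the reducedness of $\speciesb$ to cut the indexing diagram down to surjections, and then to observe that over surjections the indexing groupoid is so rigid that the limit and the colimit agree automatically.

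First I would note that in each factor $\speciesa(T)\otimes\bigotimes_{t\in T}\speciesb(f^{-1}(t))$, if the map $f\colon S\to T$ is not surjective then some fiber $f^{-1}(t)$ is empty and the factor $\speciesb(\emptyset)=0$ annihilates the whole tensor product. Since an isomorphism under $S$ preserves surjectivity of the maps it relates, the non-surjective objects form a union of connected components on which the diagram is identically zero, and such components contribute nothing to either the limit or the colimit. Hence both $(\speciesa\circ\speciesb)(S)$ and $(\speciesa\bcirc\speciesb)(S)$ may be computed over the full subgroupoid whose objects are the surjections $S\twoheadrightarrow T$.

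Next I would analyze this subgroupoid. For a surjection $f$, any isomorphism $\sigma\colon T\to T$ with $\sigma f=f$ must be the identity, because every $t\in T$ equals $f(s)$ for some $s$ and then $\sigma(t)=\sigma f(s)=f(s)=t$; thus each object has trivial automorphism group. More generally, if $f\colon S\twoheadrightarrow T$ and $f'\colon S\twoheadrightarrow T'$ are isomorphic under $S$, the comparison isomorphism $\sigma$ is uniquely pinned down by $\sigma(f(s))=f'(s)$, so between any two objects of a connected component there is exactly one morphism. Each component is therefore a codiscrete groupoid, equivalent to the terminal category, and over such a component both the limit and the colimit are canonically the common value $\speciesa(T)\otimes\bigotimes_{t}\speciesb(f^{-1}(t))$ at a chosen representative. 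The connected components are indexed by the partitions of $S$ (the fiber partitions of the surjections), of which there are only finitely many. Writing $V_{[f]}$ for the value at a representative of the class $[f]$, I obtain natural identifications $(\speciesa\circ\speciesb)(S)\cong\bigoplus_{[f]}V_{[f]}$ and $(\speciesa\bcirc\speciesb)(S)\cong\prod_{[f]}V_{[f]}$. Since $\category$ is additive and the index set is finite, the canonical map from the coproduct to the product is an isomorphism, and this supplies the asserted isomorphism $\speciesa\circ\speciesb\xrightarrow{\cong}\speciesa\bcirc\speciesb$, with naturality in $S$ inherited from the functoriality of these (co)limits.

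The main obstacle I expect is the bookkeeping in the collapse step: one must check that reducing each codiscrete-groupoid component to a single representative value is done coherently---that is, that the isomorphisms induced by the unique connecting morphisms are mutually compatible, so that the identifications with $\bigoplus_{[f]}V_{[f]}$ and $\prod_{[f]}V_{[f]}$ are canonical and natural---and that the comparison map between the coproduct and the product is exactly the one being asserted. Everything else is formal, resting only on the triviality of the automorphism groups of surjections and on the finiteness of the set of partitions of $S$.
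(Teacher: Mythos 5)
Your proof is correct and follows essentially the same route as the paper's: discard the non-surjective components (killed by the reducedness of $\speciesb$), reduce to the finitely many isomorphism classes of surjections out of $S$, and use the finite sum-versus-product comparison. The only real difference is at the component level, where the paper justifies the identification of limit and colimit by appealing to characteristic zero, while you observe that a surjection under $S$ admits no nontrivial automorphisms, so each component is a codiscrete groupoid and the identification is tautological---a slightly sharper argument showing this particular lemma does not actually need the characteristic-zero hypothesis.
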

\begin{proof}
For a fixed set $S$, choose a set of representatives $\{f_i:S\to T_i\}$, one for each isomorphism type of surjection $f:S\to T$ in the diagram category defining both plethysms. This set is a fortiori finite because we have restricted to surjections.

The invariant plethysm projects onto the defining factor \[(\speciesa\circ \speciesb)(S)_i\coloneqq \speciesa(T_i)\otimes \bigotimes_{t\in T_i}\speciesb(f_i^{-1}(t_i)).\] Likewise, the coinvariant plethysm receives a map from $(\speciesa\circ \speciesb)(S)_i$. 

This collection of maps then determines both: 
\begin{enumerate}
\item a map from the invariant plethysm to the direct product $\prod (\speciesa\circ \speciesb)(S)_i$ and
\item a map from the direct sum $\bigoplus (\speciesa\circ \speciesb)(S)_i$ to the coinvariant plethysm. 
\end{enumerate}
But since the product is finite, the natural map from the sum to the product is invertible and so we can compose to get a map
\[
\speciesa\bcirc \speciesb\to \prod_i (\speciesa\circ \speciesb)(S)_i\cong \bigoplus_i (\speciesa\circ \speciesb)(S)_i\to \speciesa\circ \speciesb.
\]
This overall composition is independent of the choices of representatives. Since $\speciesb$ is reduced, this runs over all isomorphism types necessary to define both  the invariant and coinvariant plethysm. Moreover, because we are working in characteristic zero, the map, for each fixed isomorphism class, is an isomorphism.
\end{proof}

There are two points that require care. First of all, we should make sure that when we actually move between the two, that we consistently adhere to the particular choice of isomorphism outlined here. That is, there are two or three different normalizations of this isomorphism present in the literature. The others differ by something like a factor of $|S|!$ or $\frac{1}{|S!|}$ on each component of the product/sum above). Secondly, we do not have such a map when $\speciesb$ is not reduced.

\begin{lemma}
There are natural isomorphisms making linear species equipped with the unit species and coinvariant plethysm a monoidal category. There are natural isomorphisms making reduced linear species equipped with the unit species and invariant plethysm a monoidal category.
\end{lemma}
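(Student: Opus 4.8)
The plan is to exhibit explicit natural isomorphisms witnessing associativity, unitality, and pentagon/triangle coherence for each of the two monoidal structures, working componentwise on finite sets and exploiting the fact that both plethysms are built from the same underlying functor $(S,f)\mapsto \speciesa(T)\otimes \bigotimes_{t\in T}\speciesb(f^{-1}(t))$ but with limits (invariant) versus colimits (coinvariant) over the diagram of maps out of $S$. First I would verify that the unit species $\Ii$ acts as a two-sided unit: for the left unit $\Ii\circ \speciesb$, the diagram of maps out of $S$ has the terminal-type objects where $T$ is such that $\Ii(T)\ne 0$, forcing $|T|=1$, so the colimit collapses to $\speciesb(S)$; the right unit $\speciesa\circ \Ii$ similarly collapses because $\Ii(f^{-1}(t))\ne 0$ forces each fiber to be a singleton, i.e.\ $f$ is an isomorphism, leaving only $\speciesa(S)$. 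The same bookkeeping handles $\bcirc$ with limits in place of colimits.

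The central task is associativity. For the coinvariant case I would establish a natural isomorphism $(\speciesa\circ \speciesb)\circ \speciesc \cong \speciesa\circ(\speciesb\circ \speciesc)$ by identifying both sides, via a Fubini-type interchange of colimits, with a single colimit over the category of composable pairs of surjections $S\xrightarrow{g} U \xrightarrow{h} T$ (equivalently, over maps $S\to T$ together with a factorization), of the object $\speciesa(T)\otimes \bigotimes_{t\in T}\bigl(\speciesb(h^{-1}(t))\otimes \bigotimes_{u\in h^{-1}(t)}\speciesc(g^{-1}(u))\bigr)$. This is precisely the associativity of composition of the species-plethysm bifunctor, and the key input is that colimits commute with the tensor factors appearing here (tensor product is cocontinuous in each variable over $\category$). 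For the invariant plethysm I would argue dually, using that the diagram categories for $\bcirc$ are \emph{finite} once we restrict to reduced $\speciesb,\speciesc$ (so only surjective maps contribute), and finite limits interchange with the relevant tensor factors over a field.

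The main obstacle I expect is the coherence bookkeeping rather than the existence of the isomorphisms: one must check the pentagon and triangle axioms, which amounts to verifying that two ways of reassociating a triple composite $\speciesa\circ\speciesb\circ\speciesc\circ\speciesd$ induce the same map on the iterated (co)limit. The clean way to dispatch this is to reduce everything to the monoidal structure on endofunctors of $\category^{\mathbf{FinSet}_{\mathrm{iso}}}$ under composition, observing that the coinvariant plethysm $\speciesa\circ\speciesb$ is naturally the Day-type composite encoding the substitution of $\speciesb$ into $\speciesa$, so that associativity and coherence are inherited from composition of functors. Alternatively, and more elementarily, I would note that all structure maps are canonical (built only from the universal properties of (co)limits and the associator of $\otimes$), so the coherence diagrams commute because they commute at the level of each indexing object in the diagram category, where they reduce to the associativity coherence of $\otimes$ in $\category$.

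Finally, I would remark that the two monoidal structures are linked by Lemma~\ref{lemma: intertwines the plethysms}: on reduced species the normalized comparison map $\speciesa\bcirc\speciesb\to\speciesa\circ\speciesb$ is a monoidal natural isomorphism, which lets one transport the coherence verification for $\bcirc$ to that for $\circ$ (on reduced species) and vice versa, so the bulk of the work needs to be carried out only once. The only genuinely nontrivial subtlety is that the unit $\Ii$ is \emph{not} reduced, so the invariant-plethysm monoidal structure lives on reduced species where $\Ii$ is absent as an object; I would phrase the unit axioms for $\bcirc$ carefully, noting that the left and right unitors still make sense because the one-element fibers and one-element targets always contribute to the limit, and $\Ii$ can be adjoined formally as a monoidal unit even though it fails the reducedness hypothesis.
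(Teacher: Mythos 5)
Your proof takes essentially the same route as the paper: both sides of the associativity constraint are identified with a single (co)limit over composable pairs of surjections $S\twoheadrightarrow T\twoheadrightarrow U$, using the fact that colimits (and the relevant finite limits) commute with tensor products, with the unitors computed directly and coherence dispatched by canonicity. One correction to your final paragraph: by the paper's definition a species is reduced if it vanishes on the \emph{empty} set, and $\Ii(\emptyset)=0$, so $\Ii$ \emph{is} reduced; it is a genuine object of the category of reduced species, the unit axioms for $\bcirc$ pose no difficulty, and no formal adjunction of the unit is needed.
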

\begin{proof}
The left and right unitor isomorphisms can be constructed by direct computation of the c(co)limits involved. 

Colimits (and essentially finite limits) commute with tensor product. Then $(\speciesa\circ \speciesb)\circ \speciesc$ and $\speciesa\circ (\speciesb\circ \speciesc)$ are both naturally isomorphic to 
\[
\colim_{S\stackrel{f}{\twoheadrightarrow} T\stackrel{g}{\twoheadrightarrow} U} \left(\speciesa(U)\otimes \bigotimes_{u\in U} \speciesb(g^{-1}(u))\otimes \bigotimes_{t\in T} \speciesc(f^{-1}(t))\right).
\]
Verifying that these natural isomorphisms satisfy the triangle and pentagon axioms is straightforward. The case of the invariant plethysm is basically the same.
\end{proof}
\subsection{Operads and cooperads}
\begin{defi}
An \emph{operad} is a monoid in the monoidal category of linear species with coinvariant plethysm.
A (reduced) \emph{cooperad} is a comonoid in the monoidal category of reduced species with invariant plethysm.
\end{defi}
The data of an operad $\operad=(\operadspecies,\eta,\mu)$ consists of a species $\operadspecies$ equipped with maps $\eta:\Ii\to\operadspecies$ (the \emph{unit}) and $\operadspecies\circ\operadspecies\xrightarrow{\mu} \operadspecies$ (the \emph{composition}). The composition must be associative and the unit must satisfy left and right unit properties.

More explicitly, to specify a composition map out of the defining colimit of $\operadspecies\circ\operadspecies$ it suffices to give a map out of each term with the appropriate equivariance. So for a map $f:S\to T$, one can specify a map 
\[\mu_{f}:\operadspecies(T)\otimes\bigotimes_{t\in T} \operadspecies(f^{-1}(t))\to \operadspecies(S)\]
and then define the composition map as the colimit of $\mu_f$.

Similarly, the data of a cooperad $\cooperad=(\coopspecies,\epsilon,\Delta)$ consists of a reduced species $\coopspecies$ equipped with maps  $\epsilon:\coopspecies\to I$ (the \emph{counit}) and $\coopspecies\xrightarrow{\Delta}\coopspecies\bcirc\coopspecies$ (the \emph{decomposition}). The decomposition must be coassociative and the counit must satisfy left and right counit properties. 

More explicitly, to specify a decomposition map into the defining limit of $\coopspecies\bcirc\coopspecies$ it suffices to give a map into each term with the appropriate coequivariance. So for a surjection $f:S\twoheadrightarrow T$, one can specify a map 
\[\Delta_{f}:\coopspecies(S)\to \coopspecies(T)\otimes\bigotimes_{t\in T} \coopspecies(f^{-1}(t))\]
and then define $\Delta$ as the limit of $\Delta_{f}$.

In practice, the (co)equivariance and (co)unital conditions are easy to verify and the main thing to check is (co)associativity. 
\begin{remark*}
The expression of operads as monoids in a monoidal category is due to Smirnov~\cite{Smirnov:HTC}; the dual picture was written down in~\cite{GetzlerJones:OHAIIDLS}. In general, biased definitions are more common in the literature. Given a (co)operad in this unbiased definition, one can recover the data of a (co)operad under a more standard definition by restricting to the full subcategory containing only the objects $[n]$.
\end{remark*}
\subsection{Examples}
We shall use a few simple operads and cooperads. In all of the following, 
\begin{enumerate}
\item by definition all the species in the examples are reduced, and sets $S$ are assumed to be non-empty.
\item all units and counits are given by the identity map $\fieldk\to \fieldk$ for each singleton set $S$,
\item it is easy to verify (co)unitality and (co)equivariance, and
\item it is a straightforward (potentially tedious) calculation to verify (co)associativity of the specified (co)composition.
\end{enumerate}
Verifications of (co)unitality, (co)equivariance, and (co)associativity are omitted.
\begin{example}\label{example:operaddefinitions}
\begin{enumerate}
\item The unit species $\Ii$, along with the identity and the canonical isomorphisms $\Ii\bcirc\Ii\cong \Ii\cong \Ii\circ\Ii$, has both an operad and cooperad structure. We denote both of these by $\counitcooperad$.
\item Let $\commspecies$ be the species with $\commspecies(S)=\fieldk$ for all $S$ (and $\commspecies$ applied to all maps is the identity on $\fieldk$). We give this species an operad structure by specifying
\[\mu_f:\fieldk\otimes \bigotimes_{t\in T}\fieldk\to \fieldk\]
given by the natural identification. This is the \emph{commutative operad} and is denoted $\comm$.
\item  Similarly, we give the data $\Delta_f$ for a cooperad with underlying species $\commspecies$. In this case as well,
\[
\Delta_f:\fieldk\to \fieldk\otimes \bigotimes_{t\in T}\fieldk
\]
is the natural identification. This is the \emph{cocommutative cooperad} and is denoted $\cocomm$.
\item Let $\assspecies$ be the species such that $\assspecies(S)$ is the $\fieldk$-linear span of total orders on $S$:
\[\Ord(S)\coloneqq\Iso(S,[|S|]).\] 
We will specify an operad with underlying species $\assspecies$. Given a surjection $f:S\to T$, there is an embedding $\iota_f:\Ord(T)\times \prod \Ord(f^{-1}(t))\to \Ord(S)$ given by
\[
\iota_f\left(\rho\times \prod \tau_t\right)(s) = \tau_{f(s)}(s) + \sum_{\rho(t)<\rho(f(s))} |f^{-1}(t)|
\]
Define the composition map $\mu_f$ as the $\fieldk$-linear extension of $\iota_f$. The resulting operad is the \emph{associative operad}, denoted $\ass$.
\item Finally, we specify a cooperad with the same underlying species $\assspecies$. The decomposition map
\[
\Delta_f:\fieldk\langle \Ord(S)\rangle\to \fieldk\langle \Ord(T)\rangle \otimes \bigotimes_{t\in T}\fieldk\langle \Ord(f^{-1}(t))\rangle.
\]
is again determined by $\iota$ by the equation
\[
\Delta_f(\sigma)=\sum_{\rho,\tau_t} \delta_{\sigma,\iota_f(\rho\times\prod \tau_t)}\left(\rho\times \prod\tau_t\right).
\]
The resulting cooperad is the \emph{coassociative cooperad} and is denoted $\coass$.
\end{enumerate}
\end{example}
\subsection{Algebras and coalgebras}

Now we move on to the discussion of algebras over operads and coalgebras over cooperads. The category $\category$ embeds into the category of (non-reduced) species as follows. Let $V$ be an object in $\category$. Then $\iota(V)$ is the species with $\iota(V)(\emptyset)=V$ and $\iota(V)(S)=0$ for nonempty $S$.

\begin{defi}
Let $\speciesa$ be an species. The \emph{Schur functor} associated to $\speciesa$ is a functor $\category\to\category$, defined by
\[
V \mapsto (\speciesa\circ \iota(V))(\emptyset).
\]
We will abuse notation and use the notation $\speciesa\circ$ for this functor.
\end{defi}

The Schur functor $\Ii\circ$ for the unit species $\Ii$ is naturally equivalent to the identity functor. Since the coinvariant plethysm is associative, the iterated Schur functor of two species is naturally isomorphic to the Schur functor of the plethysm:
\[
 \speciesa\circ (\speciesb\circ (V ))\cong
  (\speciesa\circ \speciesb)\circ(V).
  \] This implies the following.
\begin{lemma}
If the species $\speciesa$ is equipped with an operad structure, the unit and composition induce a monad structure on the Schur functor $\speciesa$.

If the reduced species $\speciesa$ is equipped with a cooperad structure, the counit and cocomposition induce a comonad structure on the Schur functor $\speciesa$.
\end{lemma}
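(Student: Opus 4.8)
The plan is to recognize the Schur-functor construction as a strong monoidal functor and then invoke the general fact that such functors carry monoids to monoids and comonoids to comonoids. Concretely, let $\mathcal{E}$ denote the strict monoidal category of endofunctors of $\category$ under composition, with unit the identity functor. The assignment $\speciesa\mapsto(\speciesa\circ -)$ sends a species to its Schur functor, and the two natural isomorphisms recorded immediately before the statement, namely $\Ii\circ -\cong\mathrm{Id}$ and $\speciesa\circ(\speciesb\circ -)\cong(\speciesa\circ\speciesb)\circ -$, are exactly the unit and tensor coherence data exhibiting this assignment as a strong monoidal functor from species-with-$\circ$ into $\mathcal{E}$. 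The coherence of these isomorphisms (the associativity pentagon and the unit triangles) follows from the coherence already established for the monoidal category of species, so I would only remark on this rather than recompute it.

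For the first claim, an operad is by definition a monoid $(\operadspecies,\eta,\mu)$ in species-with-$\circ$. Applying the monoidal functor above, the composition $\mu:\operadspecies\circ\operadspecies\to\operadspecies$ and the unit $\eta:\Ii\to\operadspecies$ become, after precomposition with the coherence isomorphisms, natural transformations $(\operadspecies\circ -)\circ(\operadspecies\circ -)\to(\operadspecies\circ -)$ and $\mathrm{Id}\to(\operadspecies\circ -)$. The monoid axioms for $(\operadspecies,\eta,\mu)$ then transport verbatim to the associativity and unit axioms for a monad, with no further computation needed once monoidality is in hand.

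For the second claim, the only wrinkle is that a cooperad is a comonoid with respect to the \emph{invariant} plethysm $\bcirc$ on \emph{reduced} species, whereas the Schur functor is built from the coinvariant plethysm $\circ$. Here I would use Lemma~\ref{lemma: intertwines the plethysms}: on reduced species the natural isomorphism $\speciesa\bcirc\speciesb\cong\speciesa\circ\speciesb$ upgrades the identity functor on reduced species to a \emph{monoidal} equivalence between $(\text{reduced species},\bcirc,\Ii)$ and $(\text{reduced species},\circ,\Ii)$. Composing the Schur-functor monoidal functor with this equivalence makes the assignment $\coopspecies\mapsto(\coopspecies\circ -)$ monoidal for $\bcirc$ as well, so that the decomposition $\Delta:\coopspecies\to\coopspecies\bcirc\coopspecies$ and the counit $\epsilon:\coopspecies\to\Ii$ become a comultiplication and a counit for the comonad $(\coopspecies\circ -)$, with the coassociativity and counit axioms inherited from those of the cooperad.

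The step I expect to require the most care is verifying that the intertwining isomorphism of Lemma~\ref{lemma: intertwines the plethysms} is genuinely monoidal, that is, that it commutes with the associativity constraints of $\bcirc$ and $\circ$ (and with the unit constraints). This is precisely the compatibility flagged in the remark following that lemma, where it is emphasized that one must adhere to the single chosen normalization of the isomorphism rather than the variants differing by factors of $|S|!$. Checking it amounts to comparing the two iterated plethysms $\coopspecies\bcirc\coopspecies\bcirc\coopspecies$ and $\coopspecies\circ\coopspecies\circ\coopspecies$ through their common description as a (co)limit over composable surjections $S\twoheadrightarrow T\twoheadrightarrow U$ and confirming that the chosen component isomorphisms assemble compatibly, with the characteristic-zero hypothesis guaranteeing that each component is an isomorphism. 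Once this monoidal compatibility is confirmed, both halves of the lemma are formal.
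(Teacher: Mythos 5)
Your proposal is correct and follows essentially the same route the paper intends: the paper offers no explicit proof, simply deducing the lemma from the two natural isomorphisms $\Ii\circ V\cong V$ and $\speciesa\circ(\speciesb\circ V)\cong(\speciesa\circ\speciesb)\circ V$, which is exactly the ``Schur functor is strong monoidal, so monoids give monads and comonoids give comonads'' argument you spell out. Your extra care about the comonad case --- that the cooperad is a comonoid for the invariant plethysm $\bcirc$ while the Schur functor is built from $\circ$, so one must check that the intertwining isomorphism of Lemma~\ref{lemma: intertwines the plethysms} (with the paper's chosen normalization) is monoidal --- addresses a genuine gap the paper silently elides, and that check does go through since the invariants-to-coinvariants comparison with no normalizing factor is compatible with tensor products over product groups.
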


\begin{defi}
Let $\operad=(\operadspecies,\eta,\mu)$ be an operad. An \emph{algebra} over $\operad$ is an algebra over the monad $\operad$. This is the same as a $\category$ object $V$ equipped with a morphism $\operad V\to V$ compatible with the monad structure.

Let $\coop=(\coopspecies,\epsilon,\Delta)$ be a cooperad. A \emph{conilpotent coalgebra} over $\coop$ is a coalgebra over the comonad ${\coopspecies\circ}$. This is the same as a $\category$ object $V$ equipped with a morphism $V\to {\coopspecies}\circ V$ compatible with the comonad structure.
\end{defi}
As is general for monads, the forgetful functor from the category of algebras over an operad $\operad=(\operadspecies,\eta,\mu)$ to $\category$ has a left adjoint, the free $\operad$-algebra functor, realized by the Schur functor and the monad structure of ${\operadspecies}\circ$. We distinguish between the Schur functor $\operadspecies\circ$ between $\category$ and itself and the Schur functor $\operad\circ$ between $\category$ and $\operad$-algebras.

Similarly, the forgetful functor $U$ from conilpotent coalgebras over a cooperad $\coop=(\coopspecies,\epsilon,\Delta)$ to $\category$ has a right adjoint, the cofree conilpotent $\coop$-coalgebra functor, realized by the Schur functor and the comonad structure of ${\coopspecies}\circ$. Again, we distinguish between the Schur functor $\cooperadspecies\circ$ between $\category$ and itself and the Schur functor $\cooperad\circ$ between $\category$ and $\cooperad$-coalgebras.

In any event, the adjunction above implies that a morphism of conilpotent $\coop$-coalgebras from some coalgebra $\coalga$ into $\coop\circ V$ may be identified via this adjoint with a $\category$ morphism from the underlying $\category$-object of $\coalga$ to $V$. 

In general, the adjunction $\Hom_{\category}(U\coalga,V)\to \Hom_{\coop\coalgebras}(\coalga,\coop\circ V)$ is realized by taking a $\category$-morphism $f$ to the composite
\[
\coalga\xrightarrow{\text{coalgebraic structure map}} \coop\circ (U\coalga)\xrightarrow{\coop\circ f} \coop\circ V
\]
and the inverse map is given by taking a coalgebra map to its composite with the counit applied to $V$:
\[
U\coalga\to U(\coop\circ V)\cong \coopspecies\circ V\to \Ii\circ V\cong V.
\]
\subsection{Automorphisms of cofree coalgebras}\label{subsec:auto}

We record a characterization of automorphisms of cofree coalgebras in terms of this adjunction. We call a cooperad \emph{strongly coaugmented} if it is reduced and takes value $\fieldk$ on a singleton. A strongly coaugmented cooperad $\coop$ accepts a map from the cooperad $\counitcooperad$ which fits into the following diagram 
\[
\xymatrix{
	\counitcooperad\ar[rr]^{\id}\ar[dr]&&\counitcooperad\\
	&\coop\ar[ur]_\epsilon
}
\]
which is necessarily unique.

For a species $\coopspecies$, given a $\category$ map $f:\coopspecies\circ V\to V$ and a finite set $S$ we let $f_S$ denote the restriction 
\[\colim_{\Aut S}\coopspecies(S)\otimes V^{\otimes S}\to \coopspecies\circ V\xrightarrow{f} V.\] 

Then we have the following.
\begin{lemma}\label{lemma: identify coalgebra iso}
Let $\coop=(\coopspecies,\epsilon,\Delta)$ be a strongly coaugmented cooperad. Let $V$ be an object of $\category$. A morphism $f:U(\coop\circ V)\cong \coopspecies\circ V\to V$ is adjoint to a coalgebra automorphism $\coop\circ V\to \coop\circ V$ if and only if $f_S$ is an isomorphism when $|S|=1$.
\end{lemma}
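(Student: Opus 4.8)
The plan is to exploit the adjunction recorded just above, which identifies coalgebra maps $\coop\circ V\to\coop\circ V$ with $\category$-morphisms $\coopspecies\circ V\to V$, and to analyze invertibility of a coalgebra map through a filtration by ``arity''. First I would fix the bookkeeping: write $f:\coopspecies\circ V\to V$ for the given morphism and $\tilde f:\coop\circ V\to\coop\circ V$ for its adjoint coalgebra map, obtained as the composite $\coop\circ V\xrightarrow{\Delta_{\coop}}\coop\circ(\coop\circ V)\xrightarrow{\coop\circ f}\coop\circ V$. Using strong coaugmentation, $\coopspecies$ takes value $\fieldk$ on singletons, so the colimit defining $(\coopspecies\circ V)(\emptyset)$ splits off a canonical summand $V\cong\coopspecies_S\circ V$ for $|S|=1$; this is exactly the piece on which $f_S$ is defined. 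The condition that $f_S$ be an isomorphism for $|S|=1$ is therefore the statement that the ``linear part'' of $f$ is invertible.

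The forward direction should be routine: if $\tilde f$ is a coalgebra automorphism, then composing with the counit $\coop\circ V\to\Ii\circ V\cong V$ recovers $f$, and the naturality of this counit together with the diagram defining a strongly coaugmented cooperad shows that the singleton component $f_S$ agrees with the $|S|=1$ component of $\tilde f$ postcomposed with the canonical map $\coopspecies\circ V\to\Ii\circ V$. Since $\tilde f$ is invertible and respects the coaugmentation $\counitcooperad\to\coop$, its restriction to the $\Ii$-summand must be invertible, giving invertibility of $f_S$.

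For the converse, which is the heart of the matter, I would build the inverse of $\tilde f$ by constructing an inverse morphism $g:\coopspecies\circ V\to V$ adjoint to it, using a recursion on the cardinality $|S|$ of the indexing sets in the colimit $\coopspecies\circ V=\colim_{\Aut S}\coopspecies(S)\otimes V^{\otimes S}$. The key structural fact is that the comonad decomposition $\Delta$ separates a top-arity term from strictly lower-arity corrections: for $|S|=n$, the $S$-component of $\tilde f$ equals $f_S$ plus a sum of terms factoring through components $f_{S'}$ with $|S'|<n$ (these lower terms come from the non-trivial surjections $S\twoheadrightarrow T$ in $\Delta$, each of which has $|T|<n$ or fibers of size $<n$). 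Granting that $f_S$ is an isomorphism for $|S|=1$ and that the monoidal unit forces the relevant components to be invertible, one defines $g_S$ inductively so that $g\circ\tilde f=\id$ componentwise, inverting the top term $f_S$ and cancelling the lower-arity contributions already controlled by the inductive hypothesis.

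The main obstacle is making the ``lower-arity correction'' claim precise and verifying that the recursion actually produces a two-sided inverse that is itself a coalgebra map. The delicate point is that $f_S$ for $|S|>1$ need not be invertible on its own; what must be shown is that the combined operator assembled from $\Delta$ is invertible whenever its leading ($|S|=1$) piece is, i.e. that the whole thing is ``unipotent up to the linear part.'' I expect to handle this by observing that the obstruction to inverting at level $n$ lives in a subquotient built only from strictly smaller $|S'|$, which is already invertible by induction, so the process terminates; one then checks compatibility of the constructed $g$ with the comonad structure, i.e. that its adjoint is a genuine coalgebra map, using the coassociativity of $\Delta$ established in the appendix. This compatibility is the place where the argument could become technically heavy, but it is forced by the adjunction and the comonad axioms rather than requiring any new idea.
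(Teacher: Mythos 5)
Your proposal is correct and follows essentially the same route as the paper: pass through the adjunction, observe that the arity-$S$ component of a composite of adjoints is a leading term perturbed by contributions involving only strictly smaller sets, and solve for a (one-sided) inverse $g_S$ by induction on $|S|$. Two details to fix when writing it out: the term that gets inverted at arity $S$ is $g_S\circ\bigl(f_1^{\otimes S}\bigr)$ --- the tensor power of the linear part --- rather than $f_S$ itself (which, as you note, need not be invertible), and the coalgebra-map compatibility you anticipate checking at the end is automatic, since $\tilde g$ is produced as the adjoint of a linear map into a cofree conilpotent coalgebra; the paper instead closes by building left and right inverses separately and noting that the existence of both forces them to agree.
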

\begin{proof}
Let $\tilde{f}$ and $\tilde{g}$ be composable morphisms from $\coop\circ V$ to itself with composite $\tilde{h}=\tilde{g}\circ\tilde{f}$. Write their adjoints from $U(\coop\circ V)$ to $V$ as $f$, $g$, and $h$. Then by using the above characterization of the adjunction, one can calculate that for $S$ a singleton, we have $h_S=g_S\circ f_S$ (identifying $V$ with $\coopspecies(S)\otimes V$). This shows the necessity of the condition.

To show sufficiency, we can proceed by induction on the size of the finite sets in the colimit definining the Schur functor. Let us be a little more explicit for the left inverse to $f$. 

Since we want $\tilde{h}$ to be the identity, we should have $h_S=0$ for $|S|>1$. The explicit formula for $h_S$ contains the term $g_S\circ (f_1^{\otimes S})$ plus a sum of terms each of which involves only $f_{S'}$ and $g_{S''}$ for some $S''$ strictly smaller than $S$. Then by invertibility of $f_1$ this suffices to define $g_S$ recursively. A similar procedure defines a right inverse. A priori the formulas defining the right inverse are different but existence of both one-sided inverses forces them to be equal.
\end{proof}
We conclude the appendix with a few remarks inessential to the flow of the paper.
\begin{remark*}
\begin{enumerate}
\item The proof above explicitly uses the fact that our species are reduced and strongly augmented. In more generality, as long as there is some filtration with good properties (often called weight grading) the same argument works. 
\item Algebras over $\ass$ (respectively $\comm$) are the same thing as associative (associative and commutative) algebra objects in $\category$, justifying the notation.
\item
The reader may have noticed a failure of parallelism, where the coalgebras are conilpotent but the algebras have no dual adjective. This failure of parallelism occurs because we have only used \emph{coinvariant} Schur functor. Even in our restricted setting, the more natural notion for coalgebras over a cooperad would involve an \emph{invariant} Schur functor. As we are interested only in conilpotent coalgebras, the construction here is preferable.
\end{enumerate}
\end{remark*}

\bibliographystyle{amsalpha} 
\bibliography{references-2016}
\end{document}